\newtheorem{teo}{Theorem}[section]
\newtheorem{pro}[teo]{Proposition}
\newtheorem{lem}[teo]{Lemma}
\newtheorem{cor}[teo]{Corollary}
\theoremstyle{definition}
\theoremstyle{remark}
\newtheorem{rem}[teo]{Remark}
\newtheorem{exa}[teo]{Example}
\numberwithin{equation}{section}
\newenvironment{sis}{\left\{\begin{aligned}}{\end{aligned}\right.}
\renewcommand{\P}{\mathbb{P}}
\renewcommand{\O}{\mathcal{O}}
\newcommand{\C}{\mathcal{C}}
\newcommand{\F}{\mathcal{F}}
\newcommand{\Z}{\mathbb{Z}}
\newcommand{\X}{\mathcal{X}}
\newcommand{\G}{\mathbb{G}_m}
\newcommand{\Pic}{{\rm Pic}}
\newcommand{\Bsm}{\mathbb{B}_{sm}(2,2g+2)}
\newcommand{\Asm}{\mathbb{A}_{sm}(2,2g+2)}
\renewcommand{\H}{\mathcal{H}_g}
\newcommand{\D}{\mathcal{D}_{2g+2}}
\renewcommand{\det}{{\rm det}}
\begin{document}

\title{A note on families of hyperelliptic curves}
\author{Sergey Gorchinskiy and Filippo Viviani}
\address{Steklov Mathematical Institute, Gubkina str. 8, 119991 Moscow (Russia)}
\email{gorchins@mi.ras.ru}
\address{Institut f\"ur Mathematik, Humboldt Universit\"at zu Berlin, 10099 Berlin (Germany).}
\email{{\tt viviani@math.hu-berlin.de}}

\thanks{The first author was partially supported by the grants RFBR 05-01-00455,
Nsh-1987.2008.1, and INTAS no. 05-1000008-8118}
\thanks{During the preparation of this paper, the second author was supported
by a grant from Mittag-Leffler Institute in Stockholm.}

\subjclass{Primary 14H10, 14A20;
                 Secondary 14D22, 14H45}
\keywords{hyperelliptic curves, stack, families}

\date{19 September 2008}

\begin{abstract}
We give a stack-theoretic proof for some results on families of
hyperelliptic curves.
\end{abstract}
\maketitle

\section{Introduction}

Let $k$ be a field and $g$ be an integer such that
${\rm char}(k)\ne 2$ and $g\ge 2$. All schemes that we consider
are of finite type over $k$.

Any family $\F\to S$ of smooth genus $g$ hyperelliptic curves is a
double cover of a conic bundle $\C\to S$ branched at a Cartier
divisor $D$ finite and \'etale of degree $2g+2$  over the base $S$
(see \cite{LK}). Conversely, starting with a family $(\C\to S,D)$ as above,
one can ask what are the obstructions to the
existence of a corresponding family of hyperelliptic curves
$\F\to S$ and how many such families does there exist.
The classical theory of double covers immediately gives the answer
to this question in terms of the functions on $\C$ and its Picard group $\Pic(\C)$.

In Theorem \ref{fam-hyper} we give a different answer to these questions
in terms of the geometry of the base $S$. Our proof is completely
stack-theoretic and uses the fact that the stack $\H$ of
hyperelliptic curves is a $\mu_2$-gerbe over the stack $\D$ of conic bundles endowed with an
effective Cartier divisor finite and \'etale of degree $2g+2$, and the fact that
both these stacks have an explicit description as quotient stacks (see \cite{AV} and
\cite{GV}).

% Note that Theorem \ref{fam-hyper} could also be proved by classical methods.

As an application of the Theorem \ref{fam-hyper}, we give a proof of
two classical facts on families of hyperelliptic curves.

In Proposition \ref{univ-fam},
we prove that there exists a tautological family of hyperelliptic curves over
a non-empty open subset of the coarse moduli space $H_g$ if and only if $g$ is odd.
Moreover, we give a different proof of \cite[Thm. 3.12]{GV}, stating that such a family
never exists over the open subset $H_g^0$ corresponding to curves without extra-automorphisms
apart from the hyperelliptic involution (this is in contrast with the fact that a tautological
family exists over the open subset $M_g^0\subset M_g$ of general curves of genus
$g\geq 3$ without automorphisms).
From this result and the rationality of $H_g$ (see \cite{Bog} and \cite{Kat}),
we deduce that the stack $\H$
is rational if and only if $g$ is odd (Corollary \ref{ratio-stack}).

In Proposition \ref{MRprop}, we give a different (and for us
simpler) proof of a result of Mestrano--Ramanan (\cite{MR}), stating
that a global $g^1_2$ for a family of hyperelliptic curves exists
only in the case $g$ even.

\section{Notations}

By $\H$, $\D$, and $H_g$ denote the stack of families of
genus $g$ smooth hyperelliptic
curves, the stack of conic bundles together with an
effective Cartier divisor finite and \'etale of degree $2g+2$ over
the base, and the common coarse moduli space of two stacks above, respectively.

Recall that given a $k$-scheme $X$ and a $k$-group scheme $G$ acting
on $X$, the quotient stack, denoted as $[X/G]$, is the category fibered
in groupoids over the category of $k$-schemes, whose fiber over a
$k$-scheme $S$ is the groupoid whose objects are $G$-torsors $E\to S$ endowed with a
$G$-equivariant morphism $E\to X$ and whose arrows are isomorphisms of the above
objects. In the particular case where $X={\rm Spec}(k)$, we get the classifying stack of $G$,
denoted with $BG$, whose fiber over $S$ is the groupoid of $G$-torsors $E\to S$.

The stacks $\H$ and $\D$ admit the following description as quotient stacks
(see \cite[Cor. 4.7]{AV} and \cite[Prop. 3.4]{GV}):
$$\begin{sis}
& \H= [\Asm/(GL_2/\mu_{g+1})], \\
& \D= [\Asm/(GL_2/\mu_{2g+2})]=[\Bsm/PGL_2],\\
& H_g= \Bsm/PGL_2,\\
\end{sis}$$
where $\Asm$ is the linear space of degree $2g+2$ binary forms
without multiple roots, $\Bsm$ is the projectivization of $\Asm$,
and $GL_2$ acts on $\Asm$ by the formula $A\cdot f(x)= f(A^{-1}\cdot
x)$.

We briefly recall the notion of the rigidification of a stack
(see \cite[Section 5.1]{ACV}).
Let $\X$  be an algebraic stack over $k$ (even though everything can be
extended to a general base scheme), $H$ a
commutative $k$-group scheme and assume that for every object $\xi\in \X(T)$
 there is an embedding $H_T \subset  {\rm Aut}_T(\xi)$ compatible with pullbacks.
Then there is an algebraic stack $\X^H$ (called the rigidification of $\X$
along $H$) together with a smooth morphism of
algebraic stacks $\phi:\X\to \X^H$ uniquely determined by the properties:
\begin{enumerate}[(i)]
 \item
For any object $\xi\in \X(T)$ with image $\eta:=\phi(\xi)\in \X^H(T)$,
we have that $H(T)$ lies in the kernel of
${\rm Aut}_T(\xi) \to {\rm Aut}_T(\eta)$.
\item
The morphism $\X\to \X^H$ is universal for morphisms of stacks $\X\to \mathcal{Y}$
satisfying (i) above.
\end{enumerate}
Moreover, a moduli space for $\X$ is also a moduli space for $\X^H$ and $\X$ is a
$H$-gerbe over $\X$, which means that (see \cite{GIR} or \cite{LMB})
\begin{enumerate}[(a)]
\item The structure morphism $\phi:\X\to \X^H$ is surjective.
\item The diagonal $\Delta:\X \rightarrow \X \times_{\X^H} \X$ is surjective.
\end{enumerate}

Let $\Psi:\H\to\D$ be the morphism of stacks sending
a family $\F\to S$ of smooth genus $g$ hyperelliptic curves over $S$ into
the underlying conic bundle $\C\to S$ together with its relative Cartier
branch divisor $D\subset \C$.
By the above explicit description, it follows that
$\Psi:\H\to\D$ realizes the stack $\D$ as the $\mu_2$-rigidification of the stack
$\H$ along the hyperelliptic involution acting on families
of hyperelliptic curves.
Thus $\H$ is a $\mu_2$-gerbe over $\D$ and they have the same coarse moduli space $H_g$.

Let $H_g^0\subset H_g$ be the open subset corresponding to hyperelliptic curves
without extra-automorphisms apart from the hyperelliptic involution.
The preimage $\Bsm^0$ of $H_g^0$ in $\Bsm$
is exactly the locus where the action of $PGL_2$ is free, hence we have
$[\Bsm^0/PGL_2]=:\D^0\cong H_g^0$.

For a small category $\mathcal{E}$, by $|\mathcal{E}|$ denote the
set of isomorphism classes of objects in $\mathcal{E}$.

\section{Main statement}

This section is devoted to the proof of the following

\begin{teo}\label{fam-hyper}
For a scheme $S$, consider a family $(\C\stackrel{p}\longrightarrow
S,D)\in Ob(\D(S))$; denote by
$\Psi^{-1}(\C\stackrel{p}\longrightarrow S,D)$ the preimage in
$|\H(S)|$ of the class of $(\C\stackrel{p}\longrightarrow S,D)$ in
the set $|\D(S)|$. We have:
\begin{itemize}
\item[(i)]
If $g$ is odd, then the set $\Psi^{-1}(\C\stackrel{p}\longrightarrow S,D)$
is non-empty if and only if \\ $p_*(\omega_{\C/S}^{g+1}(D))$ is
$2$-divisible in $\Pic(S)$. If $g$ is even, then the set
$\Psi^{-1}(\C\stackrel{p}\longrightarrow S,D)$
is non-empty if and only if $p_*(\omega_{\C/S}^{g+1}(D))$ is $2$-divisible in $\Pic(S)$,
the family $p:\C\to S$ is the projectivization of
a rank two vector bundle $V\to S$, and
$\det V=p_*(\omega^{-1}_{\C/S}(-2))$ is $2$-divisible in $\Pic(S)$.
\item[(ii)]
If the set $\Psi^{-1}(\C\stackrel{p}\longrightarrow S,D)$ is non-empty, then
it is a homogeneous space for the group
$H^1_{\acute e t}(S,\mu_2)$ with respect to the following action:
an element of $H^1_{\acute e t}(S, \mu_2)$,
corresponding to a double \'etale
cover $\widetilde{S}\to S$, sends the hyperelliptic family $(\F\to S)$ to the
family $((\F\times_S \widetilde{S})/(i\times j)\to \widetilde{S}/j=S)$,
where $i$ is the hyperelliptic involution of $\F$ and $j$ is
the non-trivial automorphism of $\widetilde{S}$ over $S$. Moreover, if
$(\C\stackrel{p}\longrightarrow S,D)\in Ob(\D^0(S))$, then the action of
$H^1_{\acute e t}(S,\mu_2)$ on $\Psi^{-1}(\C\stackrel{p}\longrightarrow S,D)$
is free.
\end{itemize}
\end{teo}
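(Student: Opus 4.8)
The plan is to exploit the $\mu_2$-gerbe structure $\Psi:\H\to\D$ and compute the obstruction to lifting an object along it in terms of the central extension $1\to\mu_2\to GL_2/\mu_{g+1}\to GL_2/\mu_{2g+2}\to1$ coming from $\mu_{g+1}\subset\mu_{2g+2}$, noting that $\mu_{2g+2}$ — hence this $\mu_2$ — acts trivially on $\Asm$. Writing $G:=GL_2/\mu_{g+1}$ and $Q:=GL_2/\mu_{2g+2}$, an object $(\C\to S,D)\in\D(S)=[\Asm/Q](S)$ is a $Q$-torsor $P$ with an equivariant map to $\Asm$, and $\Psi^{-1}(\C\to S,D)$ is non-empty precisely when $P$ lifts to a $G$-torsor, i.e.\ when the connecting class $\partial[P]\in H^2_{\acute e t}(S,\mu_2)$ vanishes. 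The torsor structure in (ii) is then the standard fact that the isomorphism classes of lifts of a $\mu_2$-gerbe form, when non-empty, a homogeneous space under $H^1_{\acute e t}(S,\mu_2)$; freeness on $\D^0$ follows because on $\Bsm^0$ the $PGL_2$-action is free, so objects of $\D^0(S)$ have no extra automorphisms and cannot identify distinct lifts. I would make the action explicit by unwinding the gerbe-twist: a class in $H^1_{\acute e t}(S,\mu_2)$ is an \'etale double cover $\widetilde S\to S$ with involution $j$, and twisting $\F$ by it gives $(\F\times_S\widetilde S)/(i\times j)$, which I must check represents the abstract $H^1_{\acute e t}(S,\mu_2)$-action predicted by the gerbe.

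The computation of $\partial[P]$ is governed by the parity of $g$ through the existence of a suitable power of the determinant. First I record that in the universal family $\omega_{\C/S}$ carries central weight $\det$ while $\O_\C(D)=\O(2g+2)$ carries central weight $\det^{-(g+1)}$, so that $\omega_{\C/S}^{g+1}(D)$ has trivial fibrewise weight, descends to $S$, and its pushforward is the line bundle attached to the character $\det^{g+1}$ of $Q$; this identifies $p_*(\omega_{\C/S}^{g+1}(D))$ with the $\det^{g+1}$-twist of $P$. When $g$ is odd the half-power $\det^{(g+1)/2}$ is a genuine character of $G$ restricting to the standard inclusion $\mu_2\hookrightarrow\G$, so the extension is the pullback of the Kummer sequence $1\to\mu_2\to\G\xrightarrow{2}\G\to1$ along $\det^{g+1}:Q\to\G$; hence $\partial[P]$ is the Kummer class of $p_*(\omega_{\C/S}^{g+1}(D))$ and vanishes iff this bundle is $2$-divisible, giving the odd case of (i).

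For $g$ even the half-power no longer exists, and instead I would use the isomorphism $Q\cong PGL_2\times\G$ given by $[A]\mapsto(\overline A,\det^{g+1}A)$ (injective on the centre since its kernel on $\mu_{2g+2}$ is trivial), under which $[P]$ becomes the pair $(\,\text{conic }\C,\ p_*(\omega_{\C/S}^{g+1}(D))\,)$. Restricting the extension $1\to\mu_2\to G\to Q\to1$ to the two factors identifies it with the classical extension $1\to\mu_2\to SL_2\to PGL_2\to1$ on the $PGL_2$-factor (since $SL_2\hookrightarrow G$) and with the Kummer extension on the $\G$-factor (the centre of $G$ maps to that of $Q$ by squaring). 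The obstruction therefore splits into the $SL_2$-lifting class of the conic and the Kummer class of $p_*(\omega^{g+1}_{\C/S}(D))$. Taking its image in $\Br(S)[2]$ kills the Kummer part and shows the $SL_2$-class maps to the Brauer class of $\C$, so $\partial[P]=0$ forces that Brauer class to vanish, i.e.\ $\C=\P(V)$; once $\C=\P(V)$ the residual $SL_2$-class is the Kummer class of $\det V=p_*(\omega^{-1}_{\C/S}(-2))$, and one expects the two Kummer factors to yield the $2$-divisibility of $\det V$ and of $p_*(\omega^{g+1}_{\C/S}(D))$ separately.

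The main obstacle is exactly this even case: a single class in $H^2_{\acute e t}(S,\mu_2)$ must be disentangled into the three stated conditions, whereas the naive Picard computation via square roots of $\O_\C(D)$ only produces the weaker requirement that $p_*(\omega^{g+1}_{\C/S}(D))\otimes\det V$ be $2$-divisible. The gap is closed by the \emph{no-multiple-roots} condition built into $\Asm$: \'etaleness of $D$ severely constrains which conics and twists occur — for instance over $\P^1$ only the balanced bundle $V$ (whose $\det V$ is a square) admits an \'etale $D$ of degree $2g+2\ge 6$, since a Hirzebruch surface $\mathbb{F}_n$ with $n\ge1$ carries at most two disjoint sections — and it is precisely this constraint, invisible to the bare line-bundle count but automatic in the presentation $[\Asm/G]$, that separates the combined class into the two independent divisibilities. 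Establishing this separation over a general base, rather than merely matching the weaker combined class, is the delicate point on which I would spend the most effort; everything else reduces to functoriality of connecting maps, the factor-wise identification of the group extension, and the weight computations above.
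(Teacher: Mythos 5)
Your route is essentially the paper's: present $\D=[\Asm/(GL_2/\mu_{2g+2})]$ and $\H=[\Asm/(GL_2/\mu_{g+1})]$, reduce (i) to the vanishing of the connecting class of the central extension $1\to\mu_2\to GL_2/\mu_{g+1}\to GL_2/\mu_{2g+2}\to 1$, split $GL_2/\mu_{2g+2}\cong\G\times PGL_2$ via $[A]\mapsto(\det(A)^{g+1},[A])$, identify the $\G$-component of the torsor with $p_*(\omega_{\C/S}^{\pm(g+1)}(\pm D))$, and separate the parities through the character $\det^{(g+1)/2}$ (odd $g$: Kummer times the trivial extension of $PGL_2$) versus $GL_2/\mu_{g+1}\cong GL_2$ (even $g$: Kummer on the $\G$-factor, $1\to\mu_2\to SL_2\to PGL_2\to 1$ on the other). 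Part (ii), including the explicit twist $(\F\times_S\widetilde S)/(i\times j)$ and freeness over $\D^0$ from freeness of the $PGL_2$-action on $\Bsm^0$, also coincides with the paper's lemma on quotient stacks. All of that is correct and matches.

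The gap is the one you flag yourself, and it is genuine: in the even case your computation produces a single class $\partial[P]=\delta_{K}(L)+\delta_{SL_2}(\C)\in H^2_{\acute e t}(S,\mu_2)$ with $L=p_*(\omega_{\C/S}^{-g-1}(-D))$, and since the two lifted subgroups (the centre $\G\subset GL_2$ and $SL_2$) commute, the class really is this sum; its vanishing, after the Brauer-group step forcing $\C=\P(V)$, is equivalent to the \emph{single} condition that $L\otimes\det V$ be $2$-divisible in $\Pic(S)$, not to the two separate divisibilities of the statement. Your proposed repair via \'etaleness of $D$ does not close this: the disjoint-sections count on $\mathbb{F}_n$ is an argument only over $S=\P^1$ (where an \'etale cover of $\P^1$ is forced to be a union of sections), and over a general base the classes of $\det V$ and of the twist $N$ defined by $\O_{\C}(D)=\O_{\P(V)}(2g+2)\otimes p^*N$ are not correlated by \'etaleness of $D$ — the paper's own examples already realize several parities of the pair $(N,\det V)$. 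So functoriality of connecting maps cannot yield the separation; you would need a genuinely new input, or else accept that what the obstruction computation actually proves in the even case is the combined condition ``$\C=\P(V)$ and $L\otimes\det V\in 2\Pic(S)$''. It is worth noting that the paper's proof computes the restrictions of the obstruction map to $H^1_{\acute e t}(S,\G)$ and to $H^1_{\acute e t}(S,PGL_2)$ separately and then reads off the three-condition statement without discussing the additivity of the two contributions, so your worry isolates exactly the thinnest point of the published argument; but as written your proposal does not prove the statement in the form given.
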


The proof of the theorem uses a general result on quotient stacks.
Let $G$ be a smooth group scheme acting on a scheme $X$. The natural
morphism of quotient stacks $[X/G]\to BG=[{\rm Spec}(k)/G]$ induces a map of
sets $|[X/G](S)|\to |BG(S)|=H^1_{\acute e t}(S,G)$ for any scheme $S$, where
we used the fact that, since $G$ is a smooth group scheme, the
isomorphism classes of $G$-torsor over $S$ are parametrized
by the first \'etale cohomology group of $S$ with coefficients in $G$
(see \cite{MIL} or \cite{GIR}).

Suppose that we are given a central extension of smooth group schemes
$$
1\to K\to G\to H\to 1 \eqno (*)
$$
and an action of $H$ on a scheme $X$. Consider the ``restriction''
of this action to $G$. Then for each scheme $S$ the group
$H^1_{\acute e t}(S,K)$ acts naturally on the set $|[X/G](S)|$
by the formula $\{U_{\alpha},f_{\alpha},g_{\alpha\beta}\}\mapsto
\{U_{\alpha},f_{\alpha},g_{\alpha\beta}k_{\alpha\beta}\}$, where
$\{U_{\alpha}\}$ is an \'etale covering of $S$, the collection
$\{f_{\alpha}:U_{\alpha}\to X,g_{\alpha\beta}:U_{\alpha}\times_S
U_{\beta}\to G\}$ represents an element from $|[X/G](S)|$, and
a 1-cocycle $\{k_{\alpha\beta}\}$ represents an element from
$H^1_{\acute e t}(S,K)$.

\begin{lem}\label{exactstack}
\hspace{0cm}
\begin{itemize}
\item[(i)]
For a scheme $S$, the natural map $|[X/G](S)|\to
|[X/H](S)|$ defines a bijection between
$|[X/G](S)|/H^1_{\acute e t}(S,K)$ and the preimage of the
trivial cohomology class under the composition $|[X/H](S)|\to
H^1_{\acute e t}(S,H) \stackrel{\partial}{\longrightarrow}
H^2_{\acute e t}(S,K)$.
\item[(ii)]
If the action of $H$ on $X$ is free, then
the action of the group $H^1_{\acute e t}(S,K)$ on the set
$|[X/G](S)|$ is free.
\end{itemize}
\end{lem}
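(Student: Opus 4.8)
The plan is to argue entirely with explicit Čech $1$-cocycles relative to an étale covering $\{U_\alpha\}$ of $S$, refining the covering as needed. An object of $[X/G](S)$ is represented by data $\{f_\alpha\colon U_\alpha\to X,\ g_{\alpha\beta}\colon U_\alpha\times_S U_\beta\to G\}$ satisfying the cocycle condition $g_{\alpha\beta}g_{\beta\gamma}=g_{\alpha\gamma}$ and the gluing condition $f_\alpha=g_{\alpha\beta}\cdot f_\beta$, where $G$ acts on $X$ through its image in $H$; the map to $|[X/H](S)|$ replaces each $g_{\alpha\beta}$ by its image $\bar g_{\alpha\beta}\in H$, and the further map forgets the $f_\alpha$ and remembers only the $H$-torsor class $\{\bar g_{\alpha\beta}\}\in H^1_{\acute e t}(S,H)$. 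Write $\Phi\colon|[X/G](S)|\to|[X/H](S)|$ for the induced map. I first compute the image of $\Phi$. The crucial observation is that, since the $G$-action on $X$ factors through $H$, lifting an object $\{f_\alpha,\bar g_{\alpha\beta}\}$ of $[X/H](S)$ to $[X/G](S)$ amounts \emph{solely} to lifting the $H$-valued cocycle $\{\bar g_{\alpha\beta}\}$ to a $G$-valued cocycle, the same $f_\alpha$ then automatically satisfying the gluing condition. Lifting each $\bar g_{\alpha\beta}$ to $G$ étale-locally is possible because $G\to H$ is smooth and surjective, and the failure of the lifts to satisfy the cocycle condition is a $K$-valued $2$-cocycle whose class is by definition $\partial$ of the underlying $H$-torsor. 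Hence the lift exists precisely when this class vanishes, so the image of $\Phi$ is exactly the preimage of the trivial class under $|[X/H](S)|\to H^1_{\acute e t}(S,H)\xrightarrow{\partial}H^2_{\acute e t}(S,K)$.

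Next I identify the fibers of $\Phi$ with the orbits of $H^1_{\acute e t}(S,K)$. One direction is immediate: multiplying $g_{\alpha\beta}$ by a $K$-valued cocycle $k_{\alpha\beta}$ leaves $\bar g_{\alpha\beta}$ unchanged, and centrality of $K$ guarantees that $\{g_{\alpha\beta}k_{\alpha\beta}\}$ is again a cocycle, so the action preserves fibers. Conversely, suppose two objects $\{f_\alpha,g_{\alpha\beta}\}$ and $\{f'_\alpha,g'_{\alpha\beta}\}$ have the same image in $|[X/H](S)|$. After refining the covering I lift the $H$-valued isomorphism realizing this equality to $G$-valued maps, again using smoothness of $G\to H$, and modify the second object by the corresponding isomorphism of $[X/G](S)$ (which does not change its class), so that both representatives share the same $f_\alpha$ and the same $\bar g_{\alpha\beta}$. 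Then $k_{\alpha\beta}:=g_{\alpha\beta}^{-1}g'_{\alpha\beta}$ maps to $1$ in $H$, hence lies in $K$, and the two cocycle conditions force $\{k_{\alpha\beta}\}$ to be a $K$-valued $1$-cocycle exhibiting the second object as the image of the first under the $H^1_{\acute e t}(S,K)$-action. Combining this with the image computation yields the bijection of part (i).

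For part (ii) I suppose that a class $\{k_{\alpha\beta}\}\in H^1_{\acute e t}(S,K)$ fixes the class of $\{f_\alpha,g_{\alpha\beta}\}$, so that after refinement there are maps $c_\alpha\colon U_\alpha\to G$ with $c_\alpha\cdot f_\alpha=f_\alpha$ and $c_\alpha g_{\alpha\beta}c_\beta^{-1}=g_{\alpha\beta}k_{\alpha\beta}$. The first relation says that $\bar c_\alpha\in H$ fixes the image of $f_\alpha$; since $H$ acts freely on $X$ this forces $\bar c_\alpha=1$, that is $c_\alpha\in K$. Centrality then turns the second relation into $k_{\alpha\beta}=c_\alpha c_\beta^{-1}$, so $\{k_{\alpha\beta}\}$ is a coboundary and its class is trivial, proving that the action is free.

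I expect the main obstacle to be the careful cocycle bookkeeping in the two lifting steps — passing from $H$ to $G$ étale-locally and matching the resulting obstruction with the connecting homomorphism $\partial$ — together with the repeated refinements of the étale covering, which must be tracked so that the action of $H^1_{\acute e t}(S,K)$ and the map $\Phi$ are seen to be well defined independently of all the choices involved.
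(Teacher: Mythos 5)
Your proof is correct and is essentially the paper's own argument: part (ii) is verbatim the same cocycle computation, and for part (i) you simply carry out explicitly, at the level of the data $\{f_\alpha,g_{\alpha\beta}\}$, the ``direct check'' that the paper performs by invoking the exact sequence of pointed sets $1\to H^1_{\acute e t}(S,G)/H^1_{\acute e t}(S,K)\to H^1_{\acute e t}(S,H)\to H^2_{\acute e t}(S,K)$. Your key observation that the $G$-action factors through $H$, so that lifting an object of $[X/H](S)$ reduces to lifting the underlying $H$-cocycle, is exactly what makes that check go through.
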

\begin{proof}
The proof of (i) is a direct check, which
uses the exact sequence of pointed sets
$$
1\to H^1_{\acute e t}(S,G)/H^1_{\acute e t}(S,K)\to H^1_{\acute e t}(S,H)\to
H^2_{\acute e t}(S,K).
$$
In order to prove (ii), suppose that $\{U_{\alpha},f_{\alpha},g_{\alpha\beta}\}$ and
$\{k_{\alpha\beta}\}$ represent elements from
$|[X/G](S)|$ and $H^1_{\acute e t}(S,K)$ such that
$\{U_{\alpha},f_{\alpha},g_{\alpha\beta}\}$ is equivalent to
$\{U_{\alpha},f_{\alpha},$ $g_{\alpha\beta}k_{\alpha\beta}\}$ in $|[X/G](S)|$.
Then, after passing to a subcovering, we see that there exists a collection
$\{g_{\alpha}:U_{\alpha}\to G\}$ such that $f_{\alpha}=g_{\alpha}f_{\alpha}$
and $g_{\alpha\beta}=g_{\alpha}^{-1}g_{\alpha\beta}k_{\alpha\beta}g_{\beta}$.
If the action of $H$ on $X$ is free, then we have $g_{\alpha}:U_{\alpha}\to K$
for all $\alpha$ and therefore the class of the cocycle $\{k_{\alpha\beta}\}$ in
$H^1_{\acute e t}(S,K)$ is trivial.
\end{proof}

\begin{proof}[Proof of Theorem \ref{fam-hyper}]
We use Lemma \ref{exactstack} with $X=\Asm$,
$K=\mu_2$, $G=GL_2/\mu_{g+1}$, $H=GL_2/\mu_{2g+2}$.

Using the explicit description
of the stack of hyperelliptic families as a quotient stack
(see \cite[Rmk. 3.3 and Thm. 4.1]{AV}),
one deduces that the ``stack-theoretic'' action of $H^1_{\acute et}(S, \mu_2)$
on $|\H(S)|$ coincides with the one described in the statement of the theorem.
This implies Theorem \ref{fam-hyper}(ii).

To prove Theorem \ref{fam-hyper}(i), we compute explicitly the obstruction map.
Recall that there is an
isomorphism of algebraic groups
$GL_2/\mu_{2g+2}\cong\G\times PGL_2$, given by the formula
$[A]\mapsto (\det(A)^{g+1},[A])$.
Using \cite[Prop. 4.6]{GV}, it is easy to show that the map
$|\D(S)|\to H^1_{\acute e t}(S,GL_2/\mu_{2g+2})$ sends
the class of a family $(\C\stackrel{p}\longrightarrow S,D)$ to the pair
$(p_*(\omega_{\C/S}^{-g-1}(-D)),\C\stackrel{p}\longrightarrow S)
\in H^1_{\acute e t}(S,GL_2/\mu_{2g+2})=
H^1_{\acute e t}(S,\G)\times H^1_{\acute e t}(S,PGL_2)$.

For $g$ odd, the isomorphism $GL_2/\mu_{g+1}\cong \G\times PGL_2$, given by
$[A]\mapsto ({\rm det}(A)^{\frac{g+1}{2}},$ $[A])$, shows that
the exact sequence $(*)$ coincides with the exact sequence
$$
1\to \mu_2\to \G\times PGL_2\stackrel{(2,1)}\longrightarrow \G\times PGL_2\to 1.
$$
For $g$ even, the isomorphism $GL_2/\mu_{g+1}\cong GL_2$, given by
$[A]\mapsto \det(A)^{\frac{g}{2}}A$, shows that the exact sequence
$(*)$ coincides with the exact sequence
$$
1\to \mu_2\to GL_2\stackrel{(\det,[\cdot])}\longrightarrow \G\times PGL_2\to 1.
$$
Hence in both cases the composition $\Pic(S)=H^1_{\acute e t}(S,\G)\to
H^1_{\acute e t}(S,GL_2/\mu_{2g+2})$ $\to H^2(S,\mu_2)$ is equal to the
coboundary map arising from the Kummer exact sequence for $\mu_2$. Thus this
composition vanishes at $p_*(\omega_{\C/S}^{-g-1}(-D))\in \Pic(S)$ if and only if
$p_*(\omega_{\C/S}^{-g-1}(-D))$ is $2$-divisible in $\Pic(S)$.

Further, for $g$ odd,
the composition $H^1_{\acute e t}(S,PGL_2)\to
H^1_{\acute e t}(S,GL_2/\mu_{2g+2})\to H^2(S,\mu_2)$ is trivial. For $g$ even,
this composition is equal to the coboundary map arising from the exact
sequence
$$
1\to \mu_2\to SL_2\to PGL_2\to 1.
$$
This coboundary map vanishes at the conic bundle $\C\stackrel{p}\longrightarrow S$
if and only if $\C=\P(V)$ for a rank two vector bundle $V\to S$ such that
$\det(V)=p_*(\omega^{-1}_{\C/S}(-2))$ is $2$-divisible in $\Pic(S)$.
This concludes the proof of Theorem \ref{fam-hyper}.
\end{proof}

\section{Examples and applications}

First let us discuss the conditions of Theorem~\ref{fam-hyper}.

% \begin{rem}\label{Zariski-locally}
% For an irreducible regular scheme $S$, a conic bundle $\C\stackrel{p}\longrightarrow S$ is
% equal to $\P(V)$ for a rank two vector bundle $V\to S$ if and only if there exists
% a non-empty open subset $U\subset S$ such that $\C|_U=\P^1\times U$.
% \end{rem}

Using the $2$-divisibility conditions in Theorem~\ref{fam-hyper}(i),
one can easily deduce the $2$-divisibility of the line bundle
$\O_{\C}(D)$ in the Picard group $\Pic(\C)$. The converse seems not
to be quite trivial for $g$ even. More precisely,
Theorem~\ref{fam-hyper}(ii) implies the following result.

\begin{cor}\label{cor-divis}
Let $\P(V)\to S$ be the projectivization of a rank two vector bundle
$V$ on $S$ and let $D\subset \P(V)$ be an effective Cartier divisor
finite and \'etale over $S$ of degree \mbox{$d\equiv 2 \pmod 4$}.
Suppose that $\O_{\P(V)}(D)$ is $2$-divisible in $\Pic(\P(V))$. Then
$\det(V)$ must be $2$-divisible in $\Pic(S)$.
\end{cor}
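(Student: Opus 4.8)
The plan is to reduce the statement to the necessary condition for non-emptiness of a hyperelliptic fibre contained in Theorem~\ref{fam-hyper} for $g$ even. Write $d=2g+2$, so that the hypothesis $d\equiv 2\pmod 4$ says exactly that $g$ is even, and the pair $(\C:=\P(V)\stackrel{p}\longrightarrow S,\,D)$ is an object of $\D(S)$, since $D$ is an effective Cartier divisor finite and \'etale of degree $2g+2$ over $S$. The quantity $p_*(\omega_{\C/S}^{-1}(-2))$ equals $\det V$ (this is the identification recorded in Theorem~\ref{fam-hyper}(i); concretely $\omega_{\C/S}^{-1}(-2)$ is the pullback of a line bundle on $S$ whose class is that of $\det V$). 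Hence the desired $2$-divisibility of $\det V$ is precisely the third condition appearing in the $g$-even case of Theorem~\ref{fam-hyper}(i). It therefore suffices to prove that the set $\Psi^{-1}(\C\stackrel{p}\longrightarrow S,D)$ is non-empty, for then Theorem~\ref{fam-hyper} forces $\det V$ to be $2$-divisible in $\Pic(S)$.

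The non-emptiness is produced by the classical double-cover construction. By hypothesis $\O_{\C}(D)\cong M^{\otimes 2}$ for some $M\in\Pic(\C)$. Since ${\rm char}(k)\ne 2$, the canonical section of $\O_{\C}(D)$ cutting out $D$ equips $\O_{\C}\oplus M^{-1}$ with a structure of $\O_{\C}$-algebra, the multiplication $M^{-1}\otimes M^{-1}=\O_{\C}(-D)\hookrightarrow\O_{\C}$ being given by this section; I set $\F:={\rm Spec}_{\C}(\O_{\C}\oplus M^{-1})$. Because $D$ is finite and \'etale of degree $2g+2=d$ over $S$ and $\C\to S$ has fibres $\P^1$, the composite $\F\to S$ is a family of smooth hyperelliptic curves of genus $g$, its quotient by the hyperelliptic involution recovers $\C\to S$, and its branch divisor is exactly $D$. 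Thus $\F$ is an object of $\H(S)$ with $\Psi(\F)=(\C\stackrel{p}\longrightarrow S,D)$, so its class lies in $\Psi^{-1}(\C\stackrel{p}\longrightarrow S,D)$, which is consequently non-empty.

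With non-emptiness established, the $g$-even necessary condition of Theorem~\ref{fam-hyper} applies to $(\C\stackrel{p}\longrightarrow S,D)$ and yields in particular that $\det V=p_*(\omega_{\C/S}^{-1}(-2))$ is $2$-divisible in $\Pic(S)$, as claimed. I expect the main obstacle to be the middle paragraph, namely the verification that the algebra $\O_{\C}\oplus M^{-1}$ really defines an object of $\H(S)$ lying over $(\C,D)$ (smoothness of $\F\to S$, genus $g$, and branch divisor precisely $D$), so that $\Psi^{-1}(\C,D)$ genuinely acquires a point. This is where the argument has its content: from $\O_{\C}(D)$ one cannot extract $2$-divisibility of $\det V$ by Picard-group bookkeeping alone, since writing $\O_{\C}(D)=\O_{\C}(d)\otimes p^*L$ in $\Pic(\C)=\Z\cdot\O_{\C}(1)\oplus p^*\Pic(S)$ only forces the factor $L$ to be $2$-divisible; the passage to $\det V$ is mediated entirely by the existence of the hyperelliptic cover together with the obstruction computation underlying Theorem~\ref{fam-hyper}.
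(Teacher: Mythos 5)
Your proof is correct and follows exactly the route the paper intends (the paper states the corollary without writing out a proof, merely asserting it follows from Theorem~\ref{fam-hyper}): since $d\equiv 2\pmod 4$ means $g$ is even, you use the classical double-cover construction from a square root of $\O_{\P(V)}(D)$ to show $\Psi^{-1}(\P(V)\to S,D)$ is non-empty, and then invoke the $g$-even necessary condition of Theorem~\ref{fam-hyper}(i) to conclude that $\det V=p_*(\omega_{\C/S}^{-1}(-2))$ is $2$-divisible. Your closing remark correctly identifies where the content lies, namely that Picard-group bookkeeping on $\Pic(\P(V))$ alone cannot reach $\det V$ and the obstruction computation behind the theorem is what does the work.
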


In Corollary~\ref{cor-divis} the hypothesis $D$ being \'etale over
$S$ is necessary as is shown the following example.

\begin{exa}
Consider the Hirzebruch surface $p:\mathbb{F}_n=\P(\O_{\P^1}\oplus \O_{\P^1}(-n))=\P(V)
\to \P^1=S$ for some $n\geq 0$. By \cite[V.2.18]{Har}, there exists an irreducible smooth curve
$D$ in the linear system $|d c_1 + a f|$ if $d, a>0$, where $c_1$ denotes the first Chern class of the line
bundle $\O_{\mathbb{F}_n}(1)$ and $f$ denotes the class of a fiber of $p$.
Then $\O_{\mathbb{F}_n}(D)$ is $2$-divisible in $\Pic(\mathbb{F}_n)$ if
$d$ and $a$ are even, while $\det(V)=\O_{\P^1}(-n)$ is not $2$-divisible in $\Pic(\P^1)$
if $n$ is odd. Clearly, $D$ is not \'etale over $S=\P^1$.
\end{exa}

In Theorem~\ref{fam-hyper}(i), all the divisibility
conditions are necessary as it is shown by the following two examples.

\begin{exa}
Suppose that ${\rm char}(k)$ does not divide $2g+2$. Consider a
general divisor $F\subset \P^1\times \P^2$ of bi-degree $(2g+2,1)$.
Let $R\subset \P^2$ be the ramification curve of the map
$p:F\to\P^2$ induced by the second projection. We put
$S=\P^2\backslash R$, $\C=\P^1\times S$, $D=F|_S$,
$V=\O_S\oplus\O_S$. Then the map $p:D\to S$ is \'etale of degree
$2g+2$ and $D\subset \C$ is a Cartier divisor since $\C$ is smooth.
Clearly, $\det V=\O_S$ is $2$-divisible in $\Pic(S)$. Further,
$\O_{\C}(D)=\O_{\C}(2g+2)\otimes p^*(\O_{S}(1))$. Moreover, $R$ is
irreducible and it is easily shown that $R$ has the even degree
$4g+2$, hence $\O_S(1)$ is not $2$-divisible in $\Pic(S)$.
Therefore, $p_*(\omega_{\C/S}^{g+1}(D))$ is not $2$-divisible in
$\Pic(S)$ (we use that $\omega_{\C/S}(2)=p^*(\det V)^{-1}=\O_{\C}$).
\end{exa}

\begin{exa}
For simplicity, suppose that ${\rm char}(k)=0$. Consider the blow-up
$P$ of $\P^3$ at a point $x\in \P^3$. By $\sigma:P\to\P^3$ and
$p:P\to \P^2$ denote the corresponding natural maps and by $E\subset
P$ denote the exceptional divisor. Recall that $P\cong\P(V)$, where
$V=\O_{\P^2}\oplus\O_{\P^2}(1)$. Let $T\subset \P^3$ be a general
surface of degree $2g+3$ such that $T$ contains $x$. Let $R\subset
\P^2$ be the ramification curve of the degree $2g+2$ map $p:F\to
\P^2$, where $F=\sigma^*(T)-E$. We put $S=\P^2\backslash R$,
$\C=P|_S$, $D=F|_S$. Then the map $p:D\to S$ is \'etale of degree
$2g+2$ and $D\subset \C$ is a Cartier divisor since $\C$ is smooth.
Further, $R$ is irreducible and it is easily shown that $R$ has the
even degree $(2g+3)(2g+2)-2$, hence $\det V=\O_S(1)$ is not
$2$-divisible in $\Pic(S)$. Moreover,
$\O_{\C}(D)\equiv\O_{\C}(2g+2)\otimes
p^*(\O_{S}(-1))\mod{p^*(2\Pic(S))}$, therefore
$p_*(\omega_{\C/S}^{g+1}(D))$ is $2$-divisible in $\Pic(S)$ if $g$
is even (we use that $\omega_{\C/S}(2)=p^*(\det
V)^{-1}=p^*(\O_S(-1))$).
\end{exa}

In Theorem~\ref{fam-hyper}(ii) the action of $H^1_{\acute e
t}(S,\mu_2)$ on hyperelliptic families over $S$ is not free in the
presence of extra-automorphisms as is shown by the following
example.

\begin{exa}
Let $S={\rm Spec}(A)$ with $A=k[T,T^{-1}]$, let $g$ be even, and let
$Q(x)\in k[x]$ be a degree $g+1$ polynomial without multiple roots.
We put
$$
P(x)=Q(x)Q(T/x)x^{g+1}\in A[x].
$$
Consider the family of
hyperelliptic curves $\F$ over $S$ whose affine model is given by
$\{y^2=P(x)\}$ (the corresponding family $\C$ equals to $\P^1\times
S$). Then the double \'etale cover $\widetilde{S}={\rm
Spec}(A[\sqrt{T}])\to S$ sends $\F$ to the family $\F'$ whose affine
model is given by $\{Ty^2=P(x)\}$. The map sending $(x, y)$ to
$(T/x, (yT^{g/2})/x^{g+1})$ defines an isomorphism between $\F$ and
$\F'$.
%%hyperelliptic curve $\F$ of genus $g$ over $S={\rm Spec}(\Q)$ whose
%%affine model is given by $y^2=x^{2g+1}-x$. Then the double \'etale
%%cover $\widetilde{S}={\rm Spec}(\Q(\sqrt{-1}))\to {\rm Spec}(\Q)$
%%sends $\F$ to the hyperelliptic curve $\F'$ whose affine model is
%%given by $-y^2=x^{2g+1}-x$. The map sending $(x, y)$ to $(-x, y)$
%%defines an isomorphism of $\F$ with $\F'$.
\end{exa}

Now we give some applications of Theorem \ref{fam-hyper}. First of all,
note the following immediate corollary of Theorem \ref{fam-hyper}.

\begin{cor} Let $(\C\to S, D)$ be as in Theorem~\ref{fam-hyper}.
If $g$ is odd, then for a non-empty open subset $U\subset S$, there
exists a hyperelliptic family $\F\to U$, which corresponds to
$\C|_U$. If $g$ is even, then the above statement is true if and
only if $\C$ is Zariski locally trivial, that is there exists
a non-empty open subset $V\subset S$ such that
$\C|_V\cong\P^1\times V$.
%is a projectivization of a rank two vector bundle. When
%$S$ is irreducible and regular, this is equivalent to the existence
%of a non-empty open subset $V\subset S$ such that
%$\C|_V\cong\P^1\times V$.
\end{cor}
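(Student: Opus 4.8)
The plan is to deduce this corollary from Theorem~\ref{fam-hyper}(i), observing that the divisibility conditions appearing there become \emph{automatically satisfied} after passing to a suitable non-empty open subset of $S$, so that the only genuine obstruction is the one about $\C$ being a projectivization of a vector bundle. First I would recall that for any line bundle $L$ on an integral scheme $S$, its restriction to a sufficiently small non-empty open $U$ is trivial, hence in particular $2$-divisible; thus the condition that $p_*(\omega_{\C/S}^{g+1}(D))$ be $2$-divisible in $\Pic(S)$ can always be arranged by shrinking $S$. The same remark applies to the line bundle $\det V=p_*(\omega_{\C/S}^{-1}(-2))$ in the $g$ even case, provided $\C$ is already a projectivization of a vector bundle over the open set in question.

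For $g$ odd this immediately settles the claim: after restricting to a small enough $U$, the single remaining hypothesis of Theorem~\ref{fam-hyper}(i) (the $2$-divisibility of $p_*(\omega_{\C/S}^{g+1}(D))$) holds, so $\Psi^{-1}(\C|_U,D|_U)$ is non-empty and a hyperelliptic family $\F\to U$ corresponding to $\C|_U$ exists. For $g$ even, the point is that Theorem~\ref{fam-hyper}(i) requires in addition that $\C\to S$ be the projectivization $\P(V)$ of a rank two vector bundle $V$. A conic bundle that is Zariski locally a product $\P^1\times V$ is precisely a projectivized vector bundle after passing to a small open set, so I would argue that, over a non-empty $V\subset S$ with $\C|_V\cong \P^1\times V$, one has $\C|_V=\P(\O_V\oplus\O_V)$, both divisibility conditions hold automatically on a further shrinking (their line bundles being trivial there), and hence the hyperelliptic family exists. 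Conversely, if such a family exists over some non-empty open $U$, then Theorem~\ref{fam-hyper}(i) forces $\C|_U=\P(V)$ for a rank two bundle $V$; shrinking $U$ so that $V$ trivializes yields $\C|_U\cong\P^1\times U$, i.e.\ Zariski local triviality.

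The main obstacle, and the only non-formal content for $g$ even, is disentangling the two a priori distinct conditions ``$\C=\P(V)$ for some rank two $V$'' and ``$\C$ is Zariski locally trivial.'' The equivalence I am using is that a conic bundle with a section (equivalently, a $\P^1$-bundle in the Zariski topology) is the projectivization of a rank two vector bundle, and that such a bundle is Zariski locally a product; conversely a Zariski locally trivial $\P^1$-fibration over an integral base is a projectivized rank two bundle over a dense open subset. I would make this precise by invoking the standard correspondence between $\P^1$-bundles and $PGL_2$-torsors that lift to $GL_2$-torsors, which is exactly the coboundary obstruction $H^1_{\acute e t}(S,PGL_2)\to H^2(S,\mu_2)$ appearing in the proof of Theorem~\ref{fam-hyper}(i); over a small enough open set this obstruction, being a Brauer class, vanishes precisely when $\C$ admits a section and hence is locally trivial. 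Thus the $g$ even statement reduces to the observation that the vanishing of the Brauer obstruction is equivalent to Zariski local triviality of $\C$, which is the desired ``if and only if.''
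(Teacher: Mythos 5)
Your argument is correct and is exactly the deduction the paper has in mind: the paper offers no written proof, calling this an ``immediate corollary'' of Theorem~\ref{fam-hyper}, and the intended reasoning is precisely yours --- the $2$-divisibility conditions are vacuous after shrinking to an open set where the relevant line bundles trivialize, so for $g$ even the only surviving obstruction is $\C|_U\cong\P(V)$, which after a further shrinking is the same as $\C|_U\cong\P^1\times U$. Your closing paragraph on the Brauer obstruction is not needed for this (the two directions already follow from ``$\P(V)$ trivializes where $V$ does'' and ``$\P^1\times V=\P(\O_V\oplus\O_V)$''), but it is not wrong.
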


Let us give a solution for the Exercise 2.3 from \cite{HM} (note
that there is a small misprint there: universal should be replaced
by tautological) together with a different proof of Theorem 3.12
from \cite{GV}.

\begin{pro}\label{univ-fam}
\hspace{0cm}
\begin{itemize}
\item[(i)]
There exists a tautological family of hyperelliptic curves over a
non-empty open subset in $H_g$ if and only if $g$ is odd.
\item[(ii)] For any $g$, there does not exist a tautological family over $H_g^0$.
\end{itemize}
\end{pro}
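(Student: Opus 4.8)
The plan is to reduce the existence of a tautological family to a lifting problem along the $\mu_2$-gerbe $\Psi\colon\H\to\D$, and to control the resulting obstruction by means of Theorem~\ref{fam-hyper}. First I would note that, since $H_g^0$ is dense and open in $H_g$ and $\D^0\cong H_g^0$, a tautological family over a non-empty open $U\subset H_g$ can, after replacing $U$ by $U\cap H_g^0$, be assumed to live over an open $U\subset H_g^0$. Under $\D^0\cong H_g^0$ the inclusion $U\hookrightarrow H_g^0$ corresponds to the restriction $(\C\to U,D)$ of the tautological pair, and a tautological family over $U$ is nothing but a lift of $(\C\to U,D)$ along $\Psi$, i.e.\ an element of $\Psi^{-1}(\C\to U,D)$. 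Writing $\gamma\in H^2_{\acute e t}(H_g^0,\mu_2)$ for the class of the $\mu_2$-gerbe $\Psi^{-1}(\D^0)\to\D^0\cong H_g^0$, such a lift exists over $U$ if and only if $\gamma|_U=0$. Thus part (i) asks whether $\gamma$ dies on some non-empty open, and part (ii) whether $\gamma$ already vanishes on all of $H_g^0$.

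For the existence assertion in (i) I would invoke the rationality of $H_g$: it provides a non-empty open $U\subset H_g^0$ isomorphic to an open subset of affine space, so that $\Pic(U)=0$. When $g$ is odd, Theorem~\ref{fam-hyper}(i) tells us that the sole obstruction to $\Psi^{-1}(\C\to U,D)\neq\emptyset$ is the $2$-divisibility of $p_*(\omega_{\C/U}^{g+1}(D))\in\Pic(U)$, which holds trivially; equivalently $\gamma$ is here the Kummer image of this line bundle and hence restricts to $0$ on any open with vanishing Picard group. This yields a tautological family over $U$ and proves the ``if'' direction of (i).

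For the non-existence statements the task is to show that $\gamma$ survives. When $g$ is even, the extra requirement in Theorem~\ref{fam-hyper}(i) that $\C$ be a projectivization is, as the proof of that theorem shows, part of the $H^2_{\acute e t}(\cdot,\mu_2)$-obstruction and fails precisely when the Brauer class of the conic bundle is non-zero. The key point I would establish is that the universal conic $\C\to H_g^0$ is non-split at the generic point, i.e.\ has non-zero class in $\Br(k(H_g))[2]$; since the Brauer group of a smooth variety injects into that of its function field, this class restricts non-trivially to every non-empty open $U$, so $\C|_U$ is never a projectivization and $\Psi^{-1}(\C\to U,D)=\emptyset$. This gives the ``only if'' half of (i) and, in particular, part (ii) for $g$ even. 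When $g$ is odd there is no conic obstruction, and (ii) reduces to showing that $\lambda:=p_*(\omega_{\C/H_g^0}^{g+1}(D))$ is \emph{not} $2$-divisible in $\Pic(H_g^0)$. Here I would use the presentation $H_g^0=\Bsm^0/PGL_2$: one computes $\Pic(\Bsm^0)$ from the fact that $\Bsm^0$ is an open subset of the projective space $\P^{2g+2}$ of binary forms of degree $2g+2$, obtained by deleting the discriminant hypersurface of even degree $4g+2$, and then descends along the free $PGL_2$-action; the outcome is that $\Pic(H_g^0)$ is a finite cyclic group of even order in which $\lambda$ has non-trivial image modulo $2$.

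The hard part will be exactly these two non-vanishing statements; the remainder is formal bookkeeping once Theorem~\ref{fam-hyper} and the rationality of $H_g$ are in hand. Concretely, the delicate inputs are (a) that the universal hyperelliptic $g^1_2$ fails to descend to a genuine pencil over $k(H_g)$ when $g$ is even — the Brauer-theoretic incarnation of the non-existence of a global $g^1_2$ for the universal family, in the spirit of Mestrano--Ramanan — and (b) the identification of $\lambda$ with a generator of the even cyclic group $\Pic(H_g^0)$ when $g$ is odd. Both are to be read off from the quotient-stack descriptions of \cite{AV} and \cite{GV}, and it is in pinning down these classes, rather than in the gerbe-theoretic formalism, that the argument requires care.
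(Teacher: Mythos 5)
Your overall architecture is the right one and is essentially the paper's: restrict to $H_g^0\cong\D^0$, view a tautological family as a lift of the universal pair $(\C_g\to H_g^0,D_{2g+2})$ along the $\mu_2$-gerbe $\Psi$, and read off the obstructions from Theorem~\ref{fam-hyper}(i). The ``if'' direction for $g$ odd (shrink to an open with trivial Picard group, so the $2$-divisibility is automatic) is fine, though rationality of $H_g$ is more than you need: any line bundle on a quasi-projective variety dies on a dense open. The problem is that the two statements carrying all the content --- (a) the universal conic bundle is not a projectivization over any non-empty open subset, and (b) $\lambda=p_*(\omega_{\C_g/H_g^0}^{g+1}(D_{2g+2}))$ is not $2$-divisible in $\Pic(H_g^0)$ --- are only asserted, with the remark that they are ``to be read off from the quotient-stack descriptions.'' As you yourself note, these \emph{are} the hard part, and your proposal does not prove either of them; so as it stands the argument is incomplete exactly where the paper's proof does its work.

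For the record, here is how the paper fills these gaps, and why your proposed routes, while reasonable, still require the same computations. For (a), the paper does not pass through the Brauer group: since $PGL_2$ acts freely on $\Bsm^0$, descent for free actions (\cite[Ch.~1, Sec.~3]{GIT}) gives $\Pic(\C_g/H_g^0)=\Pic^{PGL_2}(\P^1)=\Z\cdot\O_{\P^1}(2)$, because $\O_{\P^1}(1)$ admits no $PGL_2$-linearization; hence there is no line bundle of relative degree $1$, on $\C_g$ or (by surjectivity of restriction of Picard groups on a smooth total space) on any $\C_g|_U$, so $\C_g|_U$ is never a projectivization. Your Brauer-theoretic reformulation is equivalent, but proving non-vanishing of the generic Brauer class amounts to the same equivariant computation, which you would still have to supply. (Note also that this non-splitness holds for \emph{all} $g$, not just $g$ even --- the paper reuses it for $g$ odd in Proposition~\ref{MRprop} --- so it is not ``the $g$ even phenomenon'' your write-up suggests.) For (b), knowing that $\Pic(H_g^0)$ is finite cyclic of even order is not enough: you must identify the class of $\lambda$ itself, and the danger is precisely that it could be an even multiple of the generator. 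The paper pins it down by writing the $PGL_2$-equivariant classes $\O(\mathbb{D}_{2g+2})=p_1^*\O_{\Bsm^0}(1)\otimes p_2^*\O_{\P^1}(2)^{\otimes(g+1)}$ and $\omega_{p_1}=p_2^*\O_{\P^1}(-2)$, and applying the projection formula to get $\lambda=\O_{\Bsm^0}(1)$, the generator of $\Pic(H_g^0)$, which is not $2$-divisible since that group has even order for $g\ge 3$ by \cite[Cor.~3.8]{GV}. Without this identification your step ``$\lambda$ has non-trivial image modulo $2$'' is unsupported.
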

\begin{proof}
First we prove (i). Since $H_g$ is irreducible, we may replace it by
the open subset $H_g^0$. Further, we have $\D^0\cong H_g^0$, hence
there exists a universal family $(\C_g\to H_g^0,D_{2g+2})\in
\D^0(H_g^0)$. Explicitly, the universal family $(p:\C_g\to H_g^0,
D_{2g+2})$ is is the $PGL_2$-quotient of the family
$(p_1:\Bsm^0\times \P^1\to \Bsm^0,\mathbb{D}_{2g+2})$, where $PGL_2$
acts diagonally and $\mathbb{D}_{2g+2}$ is the tautological divisor.
Since the action of $PGL_2$ is free, it follows from \cite[Chapter
1, Section 3]{GIT} that
$$
\Pic(\C_g/H_g^0)=\Pic^{PGL_2}(\P^1)=\Z\cdot \O_{\P^1}(2).
$$
Thus the conic bundle $\C_g\to H_g^0$ does not have any
line bundle of relative degree $1$. Hence it can not be the
projectivization of a rank two vector bundle on $H_g^0$ and, by Theorem
\ref{fam-hyper}(i), we get the first conclusion.

To prove (ii) we are going to show that
$p_*(\omega_{\C_g/H_g^0}^{g+1}(D_{2g+2}))$ is not $2$-divisible in
$\Pic(H_g^0)$ if $g\geq 3$, which gives the second conclusion using
again Theorem \ref{fam-hyper}(i).
%The $PGL_2$-equivariant Picard group of $\Bsm^0\times \P^1$ is generated by the pull-backs
%$p_1^*\O_{\Bsm^0}(1)$ and of $p_2^*\O_{\P^1}(2)$.
The $PGL_2$-equivariant classes of the tautological divisor
$\mathbb{D}_{2g+2}$ and the relative dualizing sheaf $\omega_{p_1}$ for the trivial
family $p_1$ are given by
$$\begin{sis}
& \O_{\Bsm^0\times \P^1}(\mathbb{D}_{2g+2})=p_1^*\O_{\Bsm^0}(1)\otimes
p_2^*\O_{\P^1}(2)^{\otimes (g+1)}, \\
& \omega_{p_1}=p_2^*\O_{\P^1}(2)^{\otimes -1}.
\end{sis}$$
Using the projection formula, we deduce that
$p_*(\omega_{\C_g/H_g^0}^{g+1}(D_{2g+2}))$ is equal to
$$p_{1*}(\omega_{p_1}^{g+1}(\mathbb{D}_{2g+2}))=
\O_{\Bsm^0}(1)\in \Pic^{PGL_2}(\Bsm^0)=\Pic(H_g^0),
$$
which is not $2$-divisible, since $\Pic(H_g^0)$ is a finite cyclic
group generated by the above element and has even cardinality for
$g\geq 3$ (\cite[Cor. 3.8]{GV}).
\end{proof}

\begin{rem}
It is interesting to compare the above result with the ones in \cite{Mum} and \cite{Ran}.
In \cite[page 58]{Mum}, one can find an explicit tautological family
over $H_1^0$. In \cite{Ran}, it is proved that the
moduli space of "framed" hyperelliptic curves
(i.e. hyperelliptic curve $C$ plus a fixed double cover $C\to \P^1$),
does have a tautological family over an open subset but not globally.
\end{rem}

Proposition~\ref{univ-fam} can be re-interpreted as a result on the
rationality of the moduli stack $\H$. Following \cite[Section
4]{BH}, we say that an irreducible algebraic stack $\X$ is
rational if it has an open substack isomorphic to $X\times BG$,
where $X$ is a rational variety and $G$ is the generic isotropy
group of $\X$.

\begin{cor}\label{ratio-stack}
The stack $\H$ is rational if and only if $g$ is odd.
\end{cor}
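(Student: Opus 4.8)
The plan is to deduce Corollary~\ref{ratio-stack} from Proposition~\ref{univ-fam}(i) together with the known rationality of the coarse space $H_g$, exploiting the gerbe structure recorded in Section~2. The stack $\H$ is irreducible with generic isotropy group $G=\mu_2$ (the hyperelliptic involution is present for every hyperelliptic curve, and over the open locus $H_g^0$ there are no further automorphisms). So according to the definition of \cite[Section 4]{BH} recalled just above, proving rationality of $\H$ amounts to exhibiting an open substack of $\H$ isomorphic to $X\times B\mu_2$ with $X$ a rational variety.

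First I would treat the case $g$ odd. By Proposition~\ref{univ-fam}(i) there is a tautological family of hyperelliptic curves over a non-empty open subset $U\subset H_g$; I may shrink $U$ so that $U\subseteq H_g^0$, where the only automorphisms are $\mu_2$. A tautological family over $U$ is precisely a section of the coarse-space morphism $\H\to H_g$ over $U$, i.e.\ a morphism $U\to\H$ lifting the identity of $U$. Since over $H_g^0$ the stack $\H$ is a $\mu_2$-gerbe over $\D^0\cong H_g^0$ (Section~2), such a section trivializes the gerbe: the restriction $\H|_U$ becomes isomorphic to $U\times B\mu_2$. As $H_g$ is rational (\cite{Bog}, \cite{Kat}), the open subset $U$ is a rational variety, and hence $\H$ is rational in the sense of \cite{BH}. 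The main subtlety here is checking carefully that a tautological family is the same datum as a gerbe-splitting section, so that the open substack is genuinely $U\times B\mu_2$ and not merely a gerbe with a rational point over each geometric point.

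For the converse, suppose $\H$ is rational, so there is an open substack $\mathcal{U}\subseteq\H$ with $\mathcal{U}\cong X\times B\mu_2$ for some rational variety $X$. The projection $X\times B\mu_2\to X$ supplies a section of $\mathcal{U}$ over $X$, and composing with the coarse-space map identifies $X$ (up to shrinking) with an open subset of $H_g$ carrying a tautological family. Thus a tautological family exists over a non-empty open subset of $H_g$, and Proposition~\ref{univ-fam}(i) forces $g$ to be odd. The hard part will be to argue cleanly that the abstract splitting $\mathcal{U}\cong X\times B\mu_2$ descends to an actual tautological family over an open subset of $H_g$ (rather than over some étale or finer cover); this uses that $X$ maps to the coarse space $H_g$ and that, after shrinking, this map is an open immersion onto its image. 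Once this dictionary between tautological families and gerbe splittings is established in both directions, the corollary follows immediately from Proposition~\ref{univ-fam}(i).
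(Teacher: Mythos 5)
Your proof is correct and follows essentially the same route as the paper: restrict to the $\mu_2$-gerbe $\H^0\to H_g^0$, use the rationality of $H_g$ to reduce rationality of $\H$ to Zariski-local triviality of this gerbe, identify gerbe-splitting sections with tautological families, and conclude by Proposition~\ref{univ-fam}(i). The ``dictionary'' you flag as the delicate point is exactly the equivalence the paper also invokes (tautological family over $U$ $=$ section $U\to\H^0$ $=$ trivialization $\H^0|_U\cong U\times B\mu_2$), so there is no gap.
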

\begin{proof}
The open substack $\H^0\subset \H$ is a $\mu_2$-gerbe over $H_g^0$.
Since $H_g$ (and hence $H_g^0$) is rational (see \cite{Bog},
\cite{Kat}), $\H^0$ is rational if and only if it is a neutral gerbe
locally in the Zariski topology of $H_g^0$. This is equivalent to
the existence of a tautological family Zariski-locally on $H_g^0$
and hence we conclude by Proposition~\ref{univ-fam}.
\end{proof}

The other application concerns the existence of a global $g^1_2$ for
a hyperelliptic family $\F\stackrel{\pi}\longrightarrow S$, i.e.,
the existence of a line bundle on $\F$ such that its restriction to
any geometric fiber of $\pi$ coincides with the unique line bundle of degree
$2$ and having two independent global sections. We will use the following criterion
for the existence of a global $g^1_2$.

\begin{lem}\label{Criterion}
Let  $\pi:\F\to S$ be a family of hyperelliptic curves which is a double cover
of the conic bundle $p:\C\to S$.
Assume $S$ is irreducible with generic point $\eta={\rm Spec}(k(S))$.
Consider the following conditions:
\begin{itemize}\label{criterion}
\item[(i)] There exists a $g^1_2$ on $\F$.
%\item[(ii)] There is a non-empty open subset $U\subset S$ such that the
%restriction $\F_U\to U$ admits a ${G^1_2}_{|U}$.
\item[(ii)] The hyperelliptic curve $\F_{\eta}$ admits a $g^1_2$ defined over
$k(S)$.
\item[(iii)] The conic $C_{\eta}$ is isomorphic to $\P^1_{k(S)}$.
\item[(iv)] $p:\C\to S$ is the projectivization of a rank two vector bundle on $S$.
\end{itemize}
Then the following implications hold true:
(i) $\Rightarrow$ (ii) $\Leftrightarrow$ (iii) $\Leftarrow$ (iv). Moreover, if $S$ is
smooth over $k$, then the above conditions are all equivalent.
\end{lem}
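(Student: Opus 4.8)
The plan is to establish the cycle of implications $(i)\Rightarrow(ii)\Leftrightarrow(iii)\Leftarrow(iv)$, and then to prove the reverse implications $(iii)\Rightarrow(iv)$ and $(ii)\Rightarrow(i)$ under the additional hypothesis that $S$ is smooth over $k$.

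First I would handle the easy implications. For $(i)\Rightarrow(ii)$, a line bundle on $\F$ giving a $g^1_2$ simply restricts to the generic fiber $\F_\eta$, producing a $g^1_2$ defined over $k(S)$. For $(ii)\Leftrightarrow(iii)$, I would use the fact that $\F_\eta\to C_\eta$ is the hyperelliptic double cover: a $g^1_2$ on $\F_\eta$ over $k(S)$ is exactly the pullback of $\O_{C_\eta}(1)$, so it exists over $k(S)$ precisely when $C_\eta$ has a $k(S)$-rational divisor class of degree $1$, which for a conic is equivalent to $C_\eta\cong\P^1_{k(S)}$ (a smooth conic over a field is rational iff it has a rational point, iff its anticanonical half, a degree-one class, is rational). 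For $(iv)\Rightarrow(iii)$, if $\C=\P(V)$ then the generic fiber is manifestly $\P^1_{k(S)}$.

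The substantive content is the two reverse implications assuming $S$ smooth over $k$. For $(iii)\Rightarrow(iv)$: knowing the generic fiber is $\P^1_{k(S)}$ means $\C$ carries a relative degree-one class over the generic point; I would spread this out to a line bundle on $\C$ of relative degree $1$ over a dense open $U\subset S$, giving $\C|_U\cong\P(V_U)$ with $V_U=p_*\O_{\C}(1)|_U$ a rank two bundle, and then extend $V_U$ across the complement. This is where smoothness of $S$ is essential: the complement $S\setminus U$ has codimension considerations that let me use reflexivity (a reflexive sheaf on a smooth—hence locally factorial, even regular—scheme that is locally free in codimension one extends as a vector bundle), so the relative degree-one line bundle, equivalently the Brauer class of the conic bundle, extends and $\C=\P(V)$ globally. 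For $(ii)\Rightarrow(i)$, equivalently $(iv)\Rightarrow(i)$: once $\C=\P(V)$, I would pull back $\O_{\P(V)}(1)$ along the double cover $\F\to\C$ to obtain a line bundle on $\F$ restricting to the $g^1_2$ on every geometric fiber.

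The main obstacle I expect is the extension step in $(iii)\Rightarrow(iv)$: controlling how the relative degree-one class (or the associated Brauer obstruction in $\Br(S)$) behaves across the boundary $S\setminus U$. Over a general irreducible $S$ a conic bundle trivial at the generic point need not be a projectivization, since the Brauer class can be nonzero while vanishing generically only if it is torsion supported on the boundary; smoothness (regularity) of $S$ is precisely what forces purity for the Brauer group, so that a class unramified in codimension one and trivial at $\eta$ is globally trivial. I would therefore invoke the purity theorem for the Brauer group of a regular scheme (equivalently, that $\Br(S)\hookrightarrow \Br(k(S))$ for $S$ regular integral) to conclude that the generic triviality of the conic propagates to all of $S$, yielding $(iv)$ and completing the equivalence.
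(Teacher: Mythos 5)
Your decomposition of the easy implications and of (ii) $\Leftrightarrow$ (iii) matches the paper's, and closing the cycle under smoothness via (ii) $\Rightarrow$ (iii) $\Rightarrow$ (iv) $\Rightarrow$ (i), rather than proving (ii) $\Rightarrow$ (i) separately, is a legitimate and in fact slightly cleaner variant: pulling back $\O_{\P(V)}(1)$ along $\F\to\C=\P(V)$ immediately gives a line bundle restricting to the $g^1_2$ on every geometric fiber, whereas the paper extends a generically defined $g^1_2$ across $\F$ and invokes local constancy of the vertical degree plus semicontinuity of $h^0$. The genuine divergence is in (iii) $\Rightarrow$ (iv). The paper stays elementary: it extends the relative-degree-one line bundle from $p^{-1}(U)$ to all of $\C$ using only that $\C$ is smooth (Weil $=$ Cartier, take the closure of the divisor), sets $M$ to be the extension, checks that $p_*(M)$ is locally free of rank two and that $p^*p_*(M)\to M$ is fiberwise surjective, and concludes $\C\cong\P(p_*M)$. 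Your purity route --- the class of the conic bundle in $H^2_{\acute e t}(S,\G)$ is $2$-torsion, and for $S$ regular integral $\Br(S)\hookrightarrow \Br(k(S))$, so triviality of $C_\eta$ forces $\C=\P(V)$ globally --- is also correct, at the cost of invoking Grothendieck's injectivity theorem for the Brauer group where a closure-of-divisors argument suffices.

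One warning: the first mechanism you offer for the extension step is wrong as stated. A reflexive sheaf of rank two on a regular scheme that is locally free in codimension one need not be locally free (rank-two reflexive, non-locally-free sheaves exist already on smooth threefolds), so you cannot extend $V_U$ across $S\setminus U$ as a vector bundle by taking a reflexive hull. What does extend automatically on a smooth, hence locally factorial, scheme is a \emph{line} bundle; this is precisely why the paper performs the extension upstairs on $\C$ at the level of the relative degree-one bundle rather than downstairs at the level of the rank-two bundle. Your fallback via Brauer purity bypasses this issue entirely, so the proof goes through, but the reflexivity parenthesis should be deleted.
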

\begin{proof}
The implications (i) $\Rightarrow$ (ii) and (iii) $\Leftarrow$ (iv) are clear.
Let us prove the equivalence (ii) $ \Leftrightarrow$ (iii).
Call $h$ the map from $\F_{\eta}$ to $C_{\eta}$. Now, if $\C_{\eta}\cong
\P^1_{k(S)}$ then $h^*(\O_{\P^1_{k(S)}}(1))$ provides the required
$g^1_2$ on $\F_{\eta}$. Conversely, if the $g^1_2$ of $\F_{\eta}$
is defined over $k(S)$ then $V:=\pi_*(g^1_2)=H^0(g^1_2)$ is a
vector space over $k(S)$ of dimension $2$ and, by construction,
$\C_{\eta}\cong \P(V)=\P^1_{k(S)}$.

Assume now that $S$ is smooth over $k$. Let us prove the implication
(i) $\Leftarrow$ (ii). The hypothesis implies that there is an open subset
$U\subset S$ such that $\pi^{-1}(U)\to U$ admits a $g^1_2$.
Since $S$ and $\pi$ are smooth (and hence also $\F$), we can extend the above
line bundle to a line bundle, call it $L$, on $\F$ (simply take the closure of the
Cartier = Weyl divisor associated to it). The line bundle $L$
has vertical degree $2$ everywhere since the
vertical degree is locally constant and $S$ is irreducible, and moreover
$h^0(\F_s,L_{|\F_s})\geq 2$ for every geometric point $s$ of $S$, by semicontinuity
of $h^0$. This implies that $L$ is the required $g^1_2$ on $\F$.

Let us finally prove the implication (iii) $ \Rightarrow$ (iv)
assuming that $S$ is smooth. The hypothesis implies that there exists
an open subset $U\subset S$ such that $p^{-1}(U)\to U$ admits a line
bundle of vertical degree one. As before, using that $\C$ is smooth
(since $p$ and $S$ are smooth), we can extend this line bundle to a line
bundle, call it $M$, on $\C$ that will have vertical degree one.
Since the geometric fibers of $p$ are $\P^1$,
we have that $p_*(M)$ is a
locally free sheaf of rank $2$. The natural map
$p^*(p_*(M))\rightarrow M$ is surjective since
its restriction to every geometric fiber is surjective. Hence it
determines an $S$-map $\Phi:\C\rightarrow \P(p_*(M))$
that, being an isomorphism on the geometric fibers, is an isomorphism.
\end{proof}

\begin{pro}\label{MRprop}
If $g$ is odd, then there does not exist a global $g^1_2$ for any
tautological family over a non-empty open subset in $H_g$. If $g$ is
even, then a global $g^1_2$ exists for any family of genus $g$
hyperelliptic curves over an irreducible smooth $k$-scheme.
\end{pro}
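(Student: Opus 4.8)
The two statements split according to the parity of $g$, and in both cases the link to a global $g^1_2$ is provided by Lemma~\ref{Criterion}: every base appearing below will be smooth, so the plan is to use the full equivalence (i)~$\Leftrightarrow$~(iv), which says that a hyperelliptic family $\F\to S$ carries a $g^1_2$ if and only if its underlying conic bundle $\C\to S$ is the projectivization $\P(V)$ of a rank two vector bundle. The whole argument then amounts to deciding, in each parity, whether the relevant conic bundle is such a projectivization, and for this I would feed off Theorem~\ref{fam-hyper}(i) when $g$ is even and off the relative Picard computation in the proof of Proposition~\ref{univ-fam} when $g$ is odd.

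For $g$ even the argument is short. Given a family $\F\to S$ of genus $g$ hyperelliptic curves over an irreducible smooth $k$-scheme $S$, I would write $(\C\stackrel{p}\longrightarrow S,D)=\Psi(\F)\in Ob(\D(S))$ for the underlying conic bundle and its branch divisor. Since $\F$ lies in $\Psi^{-1}(\C\stackrel{p}\longrightarrow S,D)$, this set is non-empty, so the $g$ even case of Theorem~\ref{fam-hyper}(i) forces $\C=\P(V)$ for a rank two vector bundle $V\to S$. This is precisely condition (iv) of Lemma~\ref{Criterion}, and as $S$ is smooth condition (i) follows, producing the desired global $g^1_2$.

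For $g$ odd I would first reduce to the open locus $H_g^0$: a $g^1_2$ on a tautological family over a non-empty open $U\subset H_g$ restricts to one over $U\cap H_g^0$, which is again non-empty (as $H_g^0$ is dense) and smooth (being the free quotient $\Bsm^0/PGL_2$), so I may assume $U\subset H_g^0$. Because $\D^0\cong H_g^0$ carries the universal object $(\C_g\to H_g^0,D_{2g+2})$, the conic bundle underlying any tautological family over $U$ is the restriction $\C_g|_U$, with generic fibre the conic $\C_{g,\eta}$ over $\eta={\rm Spec}(k(H_g))$. By Lemma~\ref{Criterion} it then suffices to show that condition (iii) fails, i.e. that $\C_{g,\eta}$ is \emph{not} isomorphic to $\P^1_{k(H_g)}$. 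Here I would invoke $\Pic(\C_g/H_g^0)=\Z\cdot\O_{\P^1}(2)$ from the proof of Proposition~\ref{univ-fam}: the restriction to the generic fibre is a surjection $\Pic(\C_g)\to\Pic(\C_{g,\eta})$ whose kernel is generated by prime divisors not dominating $H_g^0$, and since the fibres of $p$ are irreducible conics each of these is a pullback $p^*(Z)$ and restricts trivially to $\C_{g,\eta}$. Hence $\Pic(\C_{g,\eta})=\Z\cdot\O_{\P^1}(2)|_{\C_{g,\eta}}$ is generated in degree $2$; as a split conic would admit a line bundle of degree $1$, the conic $\C_{g,\eta}$ is non-split, condition (iii) fails, and no global $g^1_2$ exists.

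The step I expect to be delicate is this last transfer of information from the relative Picard group over the whole of $H_g^0$ to the generic conic. One must make sure that the passage to a smaller open $U$, implicit in verifying condition (iv), cannot manufacture a line bundle of odd relative degree; the crux is precisely that the extra divisor classes gained by shrinking $H_g^0$ live on full fibres $p^{-1}(Z)$ and are therefore pullbacks of relative degree $0$, so that the parity statement is intrinsic to $\C_{g,\eta}$ and survives restriction.
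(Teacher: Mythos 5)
Your proof is correct and follows essentially the same route as the paper: for $g$ even you invoke Theorem~\ref{fam-hyper}(i) to get $\C=\P(V)$ and then the implication (iv)~$\Rightarrow$~(i) of Lemma~\ref{Criterion}, and for $g$ odd you use the computation $\Pic(\C_g/H_g^0)=\Z\cdot\O_{\P^1}(2)$ from the proof of Proposition~\ref{univ-fam} to rule out condition (iii) of that lemma. Your extra paragraph justifying that the evenness of the relative degree descends to the generic conic (because divisors supported over proper closed subsets are pullbacks of relative degree zero) is a detail the paper leaves implicit in its assertion that $\C_g\to H_g^0$ is not Zariski locally trivial, and it is a welcome clarification.
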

\begin{proof}
If $g$ is odd then, from the proof of Proposition \ref{univ-fam}, we known
that the universal conic bundle $\C_g\to H_g^0$ is not Zariski locally
trivial and therefore we conclude by the implication (i) $\Rightarrow$ (iii) of
the above Lemma \ref{Criterion}.

If $g$ is even and $\F\to S$ is a family of hyperelliptic curves
realized as a double cover of the conic bundle $\C\to S$,
then Theorem \ref{fam-hyper}(i) gives that $\C\to S$ is the projectivization of a
rank two vector bundle on $S$. If, moreover, $S$ is irreducible and smooth over $k$,
then there exists a global $g^1_2$ on $\F$ by the implication (iv) $\Rightarrow$
(i) of the above Lemma \ref{Criterion}.
\end{proof}

\end{document}